\newtheorem{theorem}{Theorem}[section]
\theoremstyle{plain}
\newtheorem{lemma}[theorem]{Lemma}
\newtheorem{proposition}[theorem]{Proposition}
\numberwithin{equation}{section}
\DeclareMathOperator{\diver}{div}
\begin{document}
\title[Supercritical problem]{A supercritical elliptic problem in a cylindrical shell}
\author{M\'{o}nica Clapp}
\address{Instituto de Matem\'{a}ticas, Universidad Nacional Aut\'{o}noma de M\'{e}xico,
Circuito Exterior, C.U., 04510 M\'{e}xico D.F., Mexico.}
\email{monica.clapp@im.unam.mx}
\author{Andrzej Szulkin}
\address{Department of Mathematics, Stockholm University, 106 91 Stockholm, Sweden}
\email{andrzejs@math.su.se}
\thanks{This research is partially supported by CONACYT grant 129847,
UNAM-DGAPA-PAPIIT grant IN106612 (M\'{e}xico), and the Swedish Research Council.}
\date{\today}
\keywords{}

\begin{abstract}
We consider the problem%
\[
-\Delta u=\left\vert u\right\vert ^{p-2}u\text{ \ in }\Omega,\quad u=0\text{
\ on }\partial\Omega,
\]
where $\Omega:=\{(y,z)\in\mathbb{R}^{m+1}\times\mathbb{R}^{N-m-1}%
:0<a<\left\vert y\right\vert <b<\infty\}$, $0\leq m\leq N-1$ and $N\geq2.$ Let
$2_{N,m}^{\ast}:=2(N-m)/(N-m-2)$ if $m<N-2$ and $2_{N,m}^{\ast}:=\infty$ if
$m=N-2$ or $N-1.$ We show that $2_{N,m}^{\ast}$ is the true critical exponent
for this problem, and that there exist nontrivial solutions if $2<p<2_{N,m}%
^{\ast}$ but there are no such solutions if $p\geq2_{N,m}^{\ast}$.

\bigskip

\noindent2010 \emph{Mathematics Subject Classification.} 35J61, 35J20, 35B33, 35B07.

\end{abstract}
\maketitle

\begin{center}
\emph{To Bernhard Ruf on his birthday, with our friendship and great esteem.}
\end{center}

\baselineskip15pt

\section{Introduction}

Consider the Lane-Emden-Fowler problem%
\begin{equation}
-\Delta u=\left\vert u\right\vert ^{p-2}u\text{ \ in }\mathcal{D}%
\text{,\qquad}u=0\text{ \ on }\partial\mathcal{D}, \label{model}%
\end{equation}
where $\mathcal{D}$ is a smooth domain in $\mathbb{R}^{N}$ and $p>2$.

If $\mathcal{D}$ is bounded it is well-known that this problem has at least
one positive solution and infinitely many sign changing solutions when $p$ is
smaller than the critical Sobolev exponent $2^{\ast},$ defined as $2^{\ast
}:=\frac{2N}{N-2}$ if $N\geq3$ and as $2^{\ast}:=\infty$ if $N=1$ or 2. In
contrast, the existence of solutions for $p\geq2^{\ast}$ is a delicate issue.
Pohozhaev's identity \cite{po} implies that problem (\ref{model}) has no
nontrivial solution if the domain $\mathcal{D}$ is strictly starshaped. On the
other hand, Bahri and Coron \cite{bc} proved that a positive solution to
(\ref{model}) exists if $p=2^{\ast}$ and $\mathcal{D}$ is bounded and has
nontrivial reduced homology with $\mathbb{Z}/2$ coefficients.

One may ask whether this last statement is also true for $p>2^{\ast}.$
Passaseo showed in \cite{pa1,pa2} that this is not so: for each $1\leq m<N-2$
he exhibited a bounded smooth domain $\mathcal{D}$ which is homotopy
equivalent to the $m$-dimensional sphere, in which problem (\ref{model}) has
infinitely many solutions if $p<2_{N,m}^{\ast}:=\frac{2(N-m)}{N-m-2}$ and does
not have a nontrivial solution if $p\geq2_{N,m}^{\ast}.$ Examples of domains
with richer homology were recently given by Clapp, Faya and Pistoia in
\cite{cfp}. Wei and Yan established in \cite{wy} the existence of infinitely
many positive solutions for $p=2_{N,m}^{\ast}$ in some bounded domains. For
$p$ slightly below $2_{N,m}^{\ast}$ solutions concentrating along an
$m$-dimensional manifold were recently obtained in \cite{acp,dmp}. Note that
$2_{N,m}^{\ast}$ is the critical Sobolev exponent in dimension $N-m.$ It is
called the $(m+1)$-st critical exponent for problem (\ref{model}).

The purpose of this note is to exhibit unbounded domains in which this problem
has the behavior described by Passaseo.

We consider the problem
\begin{equation}
\left\{
\begin{array}
[c]{ll}%
-\Delta u=\left\vert u\right\vert ^{p-2}u & \text{in }\Omega,\\
\quad\ \ u=0 & \text{on }\partial\Omega,\\
\left\vert \nabla u\right\vert ^{2},\left\vert u\right\vert ^{p}\in
L^{1}(\Omega), &
\end{array}
\right.  \label{prob}%
\end{equation}
in a cylindrical shell
\[
\Omega:=\{x=(y,z)\in\mathbb{R}^{m+1}\times\mathbb{R}^{N-m-1}:a<\left\vert
y\right\vert <b\},\qquad0<a<b<\infty,
\]
for $p>2$.

If $m=N-1$ or $N-2$, we set $2_{N,m}^{\ast}:=\infty$. First note that if
$m=N-1$ then $\Omega=\{x\in\mathbb{R}^{N}:a<\left\vert x\right\vert <b\}$, and
a well-known result by Kazdan and Warner \cite{kw} asserts that \eqref{prob}
has infinitely many radial solutions for any $p>2$. In the other extreme case,
where $m=0,$ the domain $\Omega$ is the union of two disjoint strips
$(a,b)\times\mathbb{R}^{N-1}$ and $(-b,-a)\times\mathbb{R}^{N-1}.$ Each of
them is starshaped, so there are no solutions for $p\geq2_{N,0}^{\ast}%
=2^{\ast}.$ \ Esteban showed in \cite{e} that there are infinitely many
solutions in $(a,b)\times\mathbb{R}^{N-1}$ if $N\geq3$ and $p<2^{\ast}$, and
one positive solution if $N=2$ (in fact, she considered a more general
problem). These solutions are axially symmetric, i.e. $u(y,z)=u(y,\left\vert
z\right\vert )$ for all $(y,z)\in\Omega.$

Here we study the remaining cases, i.e., $1\leq m\leq N-2.$ Our first result
states the nonexistence of solutions other than $u=0,$ if $p\geq2_{N,m}^{\ast
}.$

\begin{theorem}
\label{thm1}If $1\leq m<N-2$ and $p\geq2_{N,m}^{\ast}$, then problem
\eqref{prob} does not have any nontrivial solution $u\in\mathcal{C}^{2}%
(\Omega)\cap\mathcal{C}^{1}(\overline{\Omega})$.
\end{theorem}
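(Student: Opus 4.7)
The plan is a Pohozaev-type integral identity argument exploiting the cylindrical shell structure. First, using $|\nabla u|^2, |u|^p \in L^1(\Omega)$ together with standard elliptic regularity, I would verify that $u$ and $\nabla u$ decay sufficiently rapidly as $|z| \to \infty$ that integration by parts produces no boundary contributions at infinity. With $u = 0$ on $\partial\Omega$, multiplying the equation by $V\cdot\nabla u$ for a smooth vector field $V$ on $\overline\Omega$ and integrating over $\Omega$ yields the general Pohozaev identity
\[
\int_\Omega \partial_iV_j\,\partial_iu\,\partial_ju - \tfrac{1}{2}\int_\Omega (\diver V)|\nabla u|^2 + \tfrac{1}{p}\int_\Omega(\diver V)|u|^p = \tfrac{1}{2}\int_{\partial\Omega}(V\cdot\nu)|\nabla u|^2\,d\sigma.
\]
Together with the Nehari identity $\int|\nabla u|^2 = \int|u|^p$, obtained by testing the equation against $u$, this is the basic tool.

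I would then apply the identity with two natural vector fields. Taking $V(y, z) = (y, z)$ recovers the classical Pohozaev identity
\[
\Bigl(\tfrac{N}{p} - \tfrac{N-2}{2}\Bigr)\int_\Omega|u|^p = \tfrac{b}{2}\int_{|y|=b}|\nabla u|^2\,d\sigma - \tfrac{a}{2}\int_{|y|=a}|\nabla u|^2\,d\sigma,
\]
whose left-hand side is strictly negative for $p \geq 2^*_{N,m} > 2^*$. The ``$z$-translational'' choice $V(y, z) = (0, z)$ is tangent to $\partial\Omega$ so the boundary term vanishes, producing
\[
\tfrac{N-m-1}{2}\int_\Omega|\nabla_y u|^2 + \tfrac{N-m-3}{2}\int_\Omega|\nabla_z u|^2 = \tfrac{N-m-1}{p}\int_\Omega|u|^p.
\]
Combined with the Nehari identity, this determines $\int|\nabla_y u|^2$ and $\int|\nabla_z u|^2$ as explicit multiples of $\int|u|^p$; at $p = 2^*_{N,m}$ they come out to $\tfrac{1}{N-m}\int|u|^p$ and $\tfrac{N-m-1}{N-m}\int|u|^p$ respectively.

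To close the argument, I would use a further identity obtained by choosing a cylindrically adapted vector field of the form $V(y,z) = (\psi(|y|)\,y/|y|, z)$ for a carefully tuned radial profile $\psi$ on $[a, b]$—designed to reflect the effective dimensional reduction from $N$ to $N-m$ afforded by the rotational symmetry in $y$, and thereby to produce in front of $\int|u|^p$ the ``$(N-m)$-dimensional'' Pohozaev coefficient $\tfrac{N-m}{p} - \tfrac{N-m-2}{2}$. This coefficient vanishes precisely at $p = 2^*_{N,m}$ and is negative for $p > 2^*_{N,m}$. Combined with the two identities above and using the nonnegativity of the remaining quadratic terms, this yields an inequality of the form $c(p)\int|u|^p \leq 0$ with $c(p) > 0$ for $p \geq 2^*_{N,m}$, forcing $u \equiv 0$. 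The principal obstacle is the explicit construction of $\psi$: since the inner and outer boundary components of $\partial\Omega$ contribute $V\cdot\nu$ of opposite signs, the profile must be selected so that—using the geometric constraints $0 < a < b$ and the restrictions $1 \leq m < N - 2$—the combined boundary term has the correct sign to match the $\int|u|^p$ coefficient. It is precisely these restrictions on $m$ that keep $2^*_{N,m}$ finite and ensure the inner hole $|y| \leq a$ is present, without which the effective-dimension argument degenerates.
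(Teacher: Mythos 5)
Your overall strategy (Poho\v{z}aev-type identities with vector fields adapted to the cylindrical shell) is the same as the paper's, but the step you defer as ``the principal obstacle'' is precisely the content of the paper's proof, so as it stands there is a genuine gap. The paper's argument rests on one explicit choice: $\chi(y,z)=(\varphi(|y|)\,y,\,z)$ with $\varphi(t)=\frac{1}{m+1}\bigl[1-(a/t)^{m+1}\bigr]$, i.e.\ the solution of $t\varphi'(t)+(m+1)\varphi(t)=1$, $\varphi(a)=0$. This profile simultaneously makes $\diver\chi\equiv N-m$, annihilates the inner boundary term ($\chi\cdot\nu=0$ on $|y|=a$, $\chi\cdot\nu>0$ constant on $|y|=b$), and satisfies $0<D\chi(x)[\xi]\cdot\xi\le|\xi|^{2}$ (checked via $O(m+1)$-equivariance and diagonalization: the eigenvalues are $1-m\varphi$, $\varphi$ and $1$, all in $(0,1]$ because $0\le\varphi\le\frac{1}{m+1}$). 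Feeding this into the identity and using $\int_\Omega|\nabla u|^2=\int_\Omega|u|^p$ gives $0<(N-m)\bigl(\tfrac1p-\tfrac12+\tfrac{1}{N-m}\bigr)\int_\Omega|\nabla u|^2$, hence $p<2_{N,m}^{\ast}$. You correctly guess the shape $(\psi(|y|)y/|y|,z)$ and even the target coefficient $\tfrac{N-m}{p}-\tfrac{N-m-2}{2}$, but you do not construct $\psi$ nor verify the three properties above, and without them no contradiction is produced; your two auxiliary identities (with $V=x$ and $V=(0,z)$) are correct but are not used in the paper and by themselves yield nothing. Note also that at $p=2_{N,m}^{\ast}$ the coefficient vanishes, so your claimed inequality $c(p)\int|u|^p\le 0$ with $c(p)>0$ cannot hold as stated at the borderline exponent; the paper handles that case by the strict positivity of the outer boundary integral, which it derives from the unique continuation property.

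A secondary gap is the justification of the identity on the unbounded domain. You propose to prove decay of $u$ and $\nabla u$ as $|z|\to\infty$; this is not established, and since your vector fields grow linearly in $z$ pointwise decay alone does not obviously control the discarded terms. The paper instead proves the identity by truncation: it multiplies $\chi$ by cutoffs $\psi_k(x)=\psi(|x|^2/k^2)$, integrates the pointwise divergence identity on bounded subdomains, and passes to the limit by dominated convergence, using exactly the structural bounds $|\chi(x)|\le|x|$, $\diver\chi$ bounded, $|D\chi(x)[\xi]\cdot\xi|\le|\xi|^{2}$ and $\chi\cdot\nu$ bounded on $\partial\Omega$ --- another reason the specific profile $\varphi$, which guarantees these bounds, is the heart of the proof.
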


Our next result shows that solutions $u\neq0$ do exist if $2<p<2_{N,m}^{\ast}$.

As usual, we write $O(k)$ for the group of linear isometries of $\mathbb{R}%
^{k}$ (represented by orthogonal $k\times k$-matrices). Recall that if $G$ is
a closed subgroup of $O(N)$ then a subset $X$ of $\mathbb{R}^{N}$ is
$G$-invariant if $gX=X$ for every $g\in G,$ and a function $u:X\rightarrow
\mathbb{R}$ is called $G$-invariant provided $u(gx)=u(x)$ for all $g\in G,$
$x\in X.$

Note that $\Omega$ is $\left[  O(m+1)\times O(N-m-1)\right]  $-invariant for
the obvious action given by $(g,h)(y,z):=(gy,hz)$ for all $g\in O(m+1),$ $h\in
O(N-m-1),$ $y\in\mathbb{R}^{m+1},$ $z\in\mathbb{R}^{N-m-1}.$

\begin{theorem}
\label{thm2} (i) If $1\leq m<N-2$ and $2<p<2_{N,m}^{\ast}$, then problem
\eqref{prob} has infinitely many $\left[  O(m+1)\times O(N-m-1)\right]
$-invariant solutions and one of these solutions is positive.\newline(ii) If
$1\leq m=N-2$ and $2<p<\infty$, then problem \eqref{prob} has a positive
$\left[  O(N-1)\times O(1)\right]  $-invariant solution.
\end{theorem}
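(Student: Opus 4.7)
The approach is to exploit the $G := [O(m+1) \times O(N-m-1)]$-symmetry of the problem and apply Palais's principle of symmetric criticality. Let $H_{G}^{1}(\Omega)$ denote the closed subspace of $G$-invariant functions in $H_{0}^{1}(\Omega)$, and consider the energy functional
\[
J(u) := \frac{1}{2}\io |\nabla u|^{2} - \frac{1}{p}\io |u|^{p}.
\]
Any critical point of the restriction $J|_{H_{G}^{1}(\Omega)}$ then gives, via symmetric criticality applied to $G$-invariant test functions plus elliptic regularity, a classical solution of \eqref{prob}.

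The crucial ingredient is the compactness of the embedding $H_{G}^{1}(\Omega) \hookrightarrow L^{p}(\Omega)$ for $2 < p < 2_{N,m}^{\ast}$ in case (i), and for every $2 < p < \infty$ in case (ii). The heuristic reason is orbit-dimensional: a $G$-invariant $u$ depends only on $r := |y| \in (a,b)$ and $s := |z| \in [0,\infty)$, and both the Dirichlet and $L^{p}$ norms become weighted two-dimensional integrals on $(a,b) \times (0,\infty)$ with weight $r^{m} s^{N-m-2}$. The bounded factor $r^{m}$ is harmless, while $s^{N-m-2}$ makes the problem behave like a Sobolev problem in effective dimension $1 + (N-m-1) = N-m$, whose critical exponent is precisely $2_{N,m}^{\ast}$. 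Compactness on bounded sets is Rellich's theorem; compactness at infinity in $s$ comes from a weighted Strauss-type pointwise decay estimate for functions that are radial in $z$. When $m = N-2$ the effective dimension is $2$ and every finite $p$ is subcritical.

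With compactness in hand the rest is routine. In case (i) the functional $J|_{H_{G}^{1}(\Omega)}$ is even, exhibits mountain-pass geometry, and satisfies the Palais--Smale condition, so a symmetric mountain-pass (fountain) argument produces an unbounded sequence of critical values and hence infinitely many $G$-invariant solutions. A positive solution is obtained by minimizing $J$ over the Nehari manifold $\cn_{G} := \{u \in H_{G}^{1}(\Omega) \setminus \{0\} : \io |\nabla u|^{2} = \io |u|^{p}\}$; passing to $|u|$ yields a nonnegative minimizer which, by the strong maximum principle, is strictly positive in $\Omega$. Case (ii) proceeds identically, delivering one positive $[O(N-1) \times O(1)]$-invariant solution. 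The only genuine obstacle in this scheme is the compactness lemma of the previous paragraph; once it is granted, every further step follows standard recipes from the subcritical theory.
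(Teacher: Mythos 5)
Your case (i) is essentially the paper's argument: reduction to invariant functions, the Esteban--Lions/Strauss-type compactness of the embedding of $z$-radial functions (valid because the $z$-variable has dimension $N-m-1\geq 2$ when $m<N-2$), and then standard genus/fountain and Nehari-minimization arguments. That part is sound.

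The gap is in case (ii). For $m=N-2$ the $z$-variable is one-dimensional, so $[O(N-1)\times O(1)]$-invariance only imposes evenness in $z$, and there is no Strauss-type decay at infinity: radial symmetry in a one-dimensional variable gives nothing. Concretely, if $\phi$ is a fixed nontrivial invariant function supported near $z=0$ and $u_n(y,z):=\phi(y,z-n)+\phi(y,z+n)$, then $(u_n)$ is bounded in the invariant subspace of $H_0^1(\Omega)$, converges weakly to $0$, but $\|u_n\|_{L^p}$ does not tend to $0$; hence the embedding $H_{G}^{1}(\Omega)\hookrightarrow L^{p}(\Omega)$ is \emph{not} compact for any $p$, contrary to your claim (the paper states this explicitly before its proof of part (ii)). Your assertion that the "effective dimension $2$" makes every finite $p$ subcritical only yields continuity of the embedding, not compactness; the obstruction is translation invariance along the $z$-axis, which evenness does not remove (mass can escape symmetrically to $\pm\infty$). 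Consequently the Palais--Smale condition fails in the invariant space and the "routine" Nehari minimization in case (ii) does not close. The paper instead minimizes over the full constraint manifold, uses P.-L. Lions' concentration lemma together with translations along the $z$-axis and the Brezis--Lieb lemma to show the infimum is attained, and only afterwards recovers the $O(1)$-symmetry in $z$ by the moving plane method. Some argument of this concentration-compactness type (or a comparison of symmetric and nonsymmetric energy levels ruling out symmetric dichotomy) is genuinely needed; as written, your proof of (ii) rests on a false compactness statement.
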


In Section \ref{poid} we prove Theorem \ref{thm1}. Theorem \ref{thm2} is
proved in Section~\ref{pf2}. We conclude the paper with a multiplicity result
and an open question in Section \ref{rem}.

\section{A Poho\v{z}aev identity and the proof of Theorem \ref{thm1}}

\label{poid}We prove Theorem \ref{thm1}\ by adapting Passaseo's argument in
\cite{pa1,pa2}, see also \cite{cfp}. The proof relies on the following special
case of a Poho\v{z}aev type identity due to Pucci and Serrin \cite{ps}.

For $(u,v)\in\mathbb{R}\times\mathbb{R}^{N}$ we set%
\[
\phi(u,v):=\frac{1}{2}\left\vert v\right\vert ^{2}-\frac{1}{p}\left\vert
u\right\vert ^{p}.
\]

\begin{lemma}
If $u\in\mathcal{C}^{2}(\Omega)$ satisfies $-\Delta u=\left\vert u\right\vert
^{p-2}u$ \ in $\Omega$ then, for every $\chi\in\mathcal{C}^{1}(\overline
{\Omega},\mathbb{R}^{N}),$ the equality%
\begin{equation}
\left(  \diver\chi\right)  \phi(u,\nabla u)-D\chi\left[  \nabla u\right]
\cdot\nabla u=\diver\left[  \phi(u,\nabla u)\chi-(\chi\cdot\nabla u)\nabla
u\right]  \label{pointwise}%
\end{equation}
holds true.
\end{lemma}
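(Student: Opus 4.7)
The identity is a purely algebraic computation: no integration is performed, and the only ingredient beyond the product rule is the equation $-\Delta u=|u|^{p-2}u$. The plan is to expand the divergence on the right-hand side of \eqref{pointwise} and match it term by term with the left-hand side.

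First I would expand the right-hand side via the product rule as
\begin{equation*}
\diver\bigl[\phi(u,\nabla u)\chi\bigr]-\diver\bigl[(\chi\cdot\nabla u)\nabla u\bigr]
=\phi(u,\nabla u)\diver\chi+\chi\cdot\nabla\phi(u,\nabla u)-\nabla(\chi\cdot\nabla u)\cdot\nabla u-(\chi\cdot\nabla u)\Delta u.
\end{equation*}
The first term already matches the leading piece of the left-hand side, so the task reduces to showing that the remaining three summands add up to $-D\chi[\nabla u]\cdot\nabla u$.

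Next I would compute the two gradients. By the chain rule,
\[
\nabla\phi(u,\nabla u)=(D^{2}u)\nabla u-|u|^{p-2}u\,\nabla u,
\]
while a direct index computation gives
\[
\nabla(\chi\cdot\nabla u)=(D\chi)^{T}\nabla u+(D^{2}u)\chi,
\]
where I interpret $D\chi$ so that $(D\chi[v])_{k}=v_{j}\partial_{j}\chi_{k}$, and hence $(D\chi)^{T}\nabla u\cdot\nabla u=D\chi[\nabla u]\cdot\nabla u$. Plugging these into the expression above, the crucial observations are:
\begin{itemize}
\item[(i)] the two Hessian contributions $\chi\cdot(D^{2}u)\nabla u$ and $\nabla u\cdot(D^{2}u)\chi$ are identical by symmetry of $D^{2}u$ and therefore cancel;
\item[(ii)] after substituting the PDE, $(\chi\cdot\nabla u)\Delta u=-(\chi\cdot\nabla u)|u|^{p-2}u$, which cancels the remaining contribution $-|u|^{p-2}u\,(\chi\cdot\nabla u)$ coming from $\chi\cdot\nabla\phi(u,\nabla u)$.
\end{itemize}
What survives is precisely $\phi(u,\nabla u)\diver\chi-D\chi[\nabla u]\cdot\nabla u$, which is the left-hand side of \eqref{pointwise}.

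There is no real obstacle; the only thing to watch carefully is the index bookkeeping, especially the transpose appearing in $\nabla(\chi\cdot\nabla u)$ and the sign flip produced by the equation. Using summation convention from the outset would make the cancellations transparent and keep the computation to a few lines.
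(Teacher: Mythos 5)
Your proposal is correct and follows essentially the same route as the paper: expand the divergence on the right-hand side by the product rule, observe that the two Hessian terms cancel by symmetry of $D^{2}u$, and use the equation $-\Delta u=|u|^{p-2}u$ to remove the remaining terms. The paper carries out exactly this computation in index notation, so the only difference is notational.
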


\begin{proof}
Put $\chi=(\chi_{1},\ldots,\chi_{N})$, denote the partial derivative with
respect to $x_{k}$ by $\partial_{k}$ and let LHS and RHS denote the left- and
the right-hand side of \eqref{pointwise}. Then
\[
\text{LHS}=\left(  \diver\chi\right)  \phi(u,\nabla u)-\sum_{j,k}\partial
_{k}\chi_{j}\,\partial_{j}u\,\partial_{k}u
\]
and
\begin{align*}
\text{RHS}  &  =\left(  \diver\chi\right)  \phi(u,\nabla u)+\sum_{j,k}\chi
_{k}\,\partial_{j}u\,\partial_{jk}^{2}u-|u|^{p-2}u\,\nabla u\cdot\chi\\
&  \qquad-(\nabla u\cdot\chi)\Delta u-\sum_{j,k}\partial_{k}\chi_{j}%
\,\partial_{j}u\,\partial_{k}u-\sum_{j,k}\chi_{j}\,\partial_{k}u\,\partial
_{jk}^{2}u\\
&  =\left(  \diver\chi\right)  \phi(u,\nabla u)-(\nabla u\cdot\chi)(\Delta
u+|u|^{p-2}u)-\sum_{j,k}\partial_{k}\chi_{j}\,\partial_{j}u\,\partial_{k}u.
\end{align*}
Since $-\Delta u=\left\vert u\right\vert ^{p-2}u$, the conclusion follows.
\end{proof}

Using a well-known truncation argument, we can now prove the following result.

\begin{proposition}
\label{proppohozhaev}Assume that $\chi\in\mathcal{C}^{1}(\overline{\Omega
},\mathbb{R}^{N})$ has the following properties:

\begin{enumerate}
\item[(a)] $\chi\cdot\nu$ is bounded on $\partial\Omega$, where $\nu(s)$ is
the outer unit normal at $s\in\partial\Omega,$

\item[(b)] $\left\vert \chi(x)\right\vert \leq\left\vert x\right\vert $ for
every $x\in\Omega,$

\item[(c)] $\diver\chi$ is bounded in $\Omega,$

\item[(d)] $\left\vert D\chi(x)\xi\cdot\xi\right\vert \leq\left\vert
\xi\right\vert ^{2}$ for all $x\in\Omega,$ $\xi\in\mathbb{R}^{N}$.
\end{enumerate}

\noindent Then every solution $u\in\mathcal{C}^{2}(\Omega)\cap\mathcal{C}%
^{1}(\overline{\Omega})$ of \emph{(\ref{prob})} satisfies%
\begin{equation}
\frac{1}{2}\int_{\partial\Omega}\left\vert \nabla u\right\vert ^{2}\chi
\cdot\nu=-\int_{\Omega}\left(  \diver\chi\right)  \phi(u,\nabla u)+\int
_{\Omega}D\chi\left[  \nabla u\right]  \cdot\nabla u. \label{PS}%
\end{equation}

\end{proposition}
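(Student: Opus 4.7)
The plan is to integrate the pointwise identity \eqref{pointwise} over the truncated domain $\Omega_R := \Omega \cap B_R(0)$, apply the divergence theorem, and then pass to the limit $R\to\infty$ along a suitable sequence, making essential use of the finite-energy condition $|\nabla u|^{2},|u|^{p}\in L^{1}(\Omega)$ together with the growth hypotheses on $\chi$.

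First I would split $\partial\Omega_R$ (modulo a lower-dimensional set) into the lateral piece $\partial\Omega\cap B_R$ and the spherical cap $\Gamma_R := \Omega\cap\partial B_R$. On the lateral piece, the Dirichlet condition $u=0$ combined with $u\in\mathcal{C}^1(\overline\Omega)$ forces the tangential derivatives of $u$ to vanish, so $\nabla u=(\partial_\nu u)\nu$ and $|\nabla u|^2=(\partial_\nu u)^2$. Since $\phi(u,\nabla u)=\tfrac12|\nabla u|^2$ on $\partial\Omega$, a short computation yields
\[
\bigl[\phi(u,\nabla u)\chi-(\chi\cdot\nabla u)\nabla u\bigr]\cdot\nu
=-\tfrac12|\nabla u|^{2}\,\chi\cdot\nu\quad\text{on }\partial\Omega,
\]
so the lateral boundary flux is exactly the left-hand side of \eqref{PS} (restricted to $B_R$), up to sign.

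Next I would control the cap. By hypothesis (b), the flux on $\Gamma_R$ is bounded pointwise by $R\bigl(|\nabla u|^{2}+|u|^{p}\bigr)$. Setting $f(R):=\int_{\Gamma_R}\bigl(|\nabla u|^{2}+|u|^{p}\bigr)\,dS$, the coarea formula gives $\int_0^\infty f(R)\,dR=\int_\Omega\bigl(|\nabla u|^2+|u|^p\bigr)\,dx<\infty$. Hence $\liminf_{R\to\infty}R\,f(R)=0$: indeed, if $R\,f(R)\geq c>0$ for all large $R$ then $f(R)\geq c/R$ and the above integral would diverge. This yields a sequence $R_n\to\infty$ along which the cap contribution vanishes.

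Finally I would pass to the limit along $R_n$. The two bulk integrals converge to their counterparts over $\Omega$ by dominated convergence, with dominating functions $\|\diver\chi\|_\infty\bigl(|\nabla u|^{2}+|u|^{p}\bigr)$ and $|\nabla u|^{2}$ provided by hypotheses (c) and (d), both in $L^{1}(\Omega)$. Since the cap term vanishes along $R_n$ and the other three terms converge, the lateral boundary integral must converge as well, yielding \eqref{PS}; hypothesis (a) guarantees that the limiting boundary integrand is well-defined. The main obstacle is precisely the cap term: the pointwise $O(R)$ bound does not by itself vanish in surface integral, and the $L^{1}$-in-$R$ property of $f$, which forces $\liminf R\,f(R)=0$ along a subsequence, is the sole place where the unboundedness of $\Omega$ genuinely enters the argument.
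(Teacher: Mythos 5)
Your proof is correct, but it implements the truncation in a genuinely different way from the paper. The paper keeps the domain essentially fixed and cuts off the vector field: it sets $\chi^{k}=\psi_{k}\chi$ with $\psi_{k}(x)=\psi(|x|^{2}/k^{2})$, integrates the pointwise identity \eqref{pointwise} over bounded smooth subdomains containing $\Omega\cap\overline{B_{2k}(0)}$, so the only boundary contribution sits on $\partial\Omega$ (weighted by $\psi_{k}$); the truncation errors then appear in the volume integrals, are dominated uniformly thanks to the bound $|x|\,|\nabla\psi_{k}(x)|\leq c_{0}$ combined with (b), (c), (d), and one passes to the limit by dominated convergence along the full sequence $k\to\infty$. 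You instead truncate the domain sharply to $\Omega\cap B_{R}$ and must kill the flux through the spherical cap; your coarea observation that $\int_{0}^{\infty}f(R)\,dR<\infty$ forces $\liminf_{R\to\infty}R\,f(R)=0$, hence a sequence of good radii $R_{n}$, plays exactly the role that the gradient bound on $\psi_{k}$ plays in the paper, and hypothesis (b) enters in the same spot (to turn $|\chi|\leq R$ on the cap into a bound by $R\,f(R)$, up to a harmless constant). What your route buys: $\chi$ is never modified, so (c) and (d) are used verbatim and no estimates on a truncated field are needed. What it costs: convergence only along the subsequence $R_{n}$ (harmless, since the two volume terms converge for the full limit) and the divergence theorem must be applied on the Lipschitz domains $\Omega\cap B_{R}$ with corners rather than on smooth subdomains (also harmless, and in both proofs one needs the mild extension of the divergence theorem for a field that is $\mathcal{C}^{1}$ only in the open set with continuous extension of field and divergence to the closure). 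One caveat, which you share with the paper: identifying the limit of the lateral integrals with $\int_{\partial\Omega}|\nabla u|^{2}\,\chi\cdot\nu$ strictly requires integrability of this product on $\partial\Omega$, which (a) alone does not give; this is innocuous in the application, where $\chi\cdot\nu\geq0$ and monotone convergence applies, but as stated both arguments really read the left-hand side of \eqref{PS} as such a limit.
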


\begin{proof}
Choose $\psi\in\mathcal{C}^{\infty}(\mathbb{R})$ such that $0\leq\psi
(t)\leq1,$ $\psi(t)=1$ if $\left\vert t\right\vert \leq1$ and $\psi(t)=0$ if
$\left\vert t\right\vert \geq2.$ For each $k\in\mathbb{N}$ define%
\[
\psi_{k}(x):=\psi\left(  \frac{\left\vert x\right\vert ^{2}}{k^{2}}\right)
\text{\qquad and\qquad}\chi^{k}(x):=\psi_{k}(x)\chi(x).
\]
Note that there is a constant $c_{0}>0$ such that%
\begin{equation}
\left\vert x\right\vert \left\vert \nabla\psi_{k}(x)\right\vert \leq
c_{0}\text{\qquad for all }x\in\mathbb{R}^{N},\text{ }k\in\mathbb{N}.
\label{psik}%
\end{equation}
Next, choose a sequence of bounded smooth domains $\Omega_{k}\subset\Omega$
such that%
\begin{equation}
\label{omegak}\Omega_{k}\supset\Omega\cap\overline{B_{2k}(0)}.
\end{equation}
Integrating (\ref{pointwise}) with $\chi:=\chi^{k}$ in $\Omega_{k}$ and using
the divergence theorem and Lemma \ref{pointwise} we obtain%
\begin{align*}
&  \int_{\Omega_{k}}\left(  \diver\chi^{k}\right)  \phi(u,\nabla
u)-\int_{\Omega_{k}}D\chi^{k}\left[  \nabla u\right]  \cdot\nabla u=\\
&  \int_{\partial\Omega_{k}}\left[  \phi(u,\nabla u)\left(  \chi^{k}\cdot
\nu^{k}\right)  -(\chi^{k}\cdot\nabla u)\left(  \nabla u\cdot\nu^{k}\right)
\right]  ,
\end{align*}
where $\nu^{k}$ is the outer unit normal to $\Omega_{k}.$ Property
(\ref{omegak}) implies that $\chi^{k}=0$ in $\overline{\Omega\smallsetminus
\Omega_{k}},$ so we may replace $\Omega_{k}$ by $\Omega$, $\partial\Omega_{k}$
by $\partial\Omega$ and $\nu^{k}$ by $\nu$ in the previous identity. Moreover,
since $u=0$ on $\partial\Omega$, we have that%
\[
\nabla u=\left(  \nabla u\cdot\nu\right)  \nu\text{ \ on }\partial\Omega.
\]
Therefore,%
\begin{align}
&  \int_{\Omega}\left(  \diver\chi^{k}\right)  \phi(u,\nabla u)-\int_{\Omega
}D\chi^{k}\left[  \nabla u\right]  \cdot\nabla u=\nonumber\\
&  \int_{\partial\Omega}\left[  \phi(u,\nabla u)\left(  \chi^{k}\cdot
\nu\right)  -(\chi^{k}\cdot\nabla u)\left(  \nabla u\cdot\nu\right)  \right]
=\label{PSbdd}\\
&  \int_{\partial\Omega}\left[  \phi(u,\nabla u)-\left\vert \nabla
u\right\vert ^{2}\right]  \left(  \chi^{k}\cdot\nu\right)  =-\frac{1}{2}%
\int_{\partial\Omega}\left\vert \nabla u\right\vert ^{2}\psi_{k}(x)\left(
\chi\cdot\nu\right)  .\nonumber
\end{align}
Since $\diver\chi^{k}=\psi_{k}\diver\chi+\nabla\psi_{k}\cdot\chi$, using
(\ref{psik}) and properties (b) and (c) we obtain%
\begin{equation}
\left\vert \diver\chi^{k}\right\vert \leq\left\vert \diver\chi\right\vert
+\left\vert \nabla\psi_{k}\right\vert \left\vert \chi\right\vert
\leq\left\vert \diver\chi\right\vert +c_{0}\leq c_{1}\text{ \ in }\Omega.
\label{div}%
\end{equation}
Similarly, since
\[
D\chi^{k}(x)\xi\cdot\xi=\psi_{k}(x)D\chi(x)\xi\cdot\xi+\left(  \nabla\psi
_{k}\cdot\xi\right)  \left(  \chi\cdot\xi\right)  ,
\]
property (d) yields%
\begin{equation}
\left\vert D\chi^{k}(x)\xi\cdot\xi\right\vert \leq\left(  1+c_{0}\right)
\left\vert \xi\right\vert ^{2}\text{\qquad for all }x\in\Omega,\ \xi
\in\mathbb{R}^{N}. \label{D}%
\end{equation}
Inequalities (\ref{div}), (\ref{D}) and property (a) allow us to apply
Lebesgue's dominated convergence theorem to the left- and the right-hand side
of (\ref{PSbdd}) to obtain%
\[
\int_{\Omega}\left(  \diver\chi\right)  \phi(u,\nabla u)-\int_{\Omega}%
D\chi\left[  \nabla u\right]  \cdot\nabla u=-\frac{1}{2}\int_{\partial\Omega
}\left\vert \nabla u\right\vert ^{2}\left(  \chi\cdot\nu\right)  ,
\]
as claimed.
\end{proof}

\begin{proof}
[Proof of Theorem \ref{thm1}]Let $\varphi(t)=\frac{1}{m+1}\left[  1-(\frac
{a}{t})^{m+1}\right]  $ be the solution to the boundary value problem%
\[
\left\{
\begin{array}
[c]{ll}%
\varphi^{\prime}(t)t+(m+1)\varphi(t)=1, & t\in(0,\infty),\\
\varphi(a)=0. &
\end{array}
\right.
\]
Define%
\begin{equation}
\chi(y,z):=(\varphi(\left\vert y\right\vert )y,z).\label{vf}%
\end{equation}
Then, if $\nu$ denotes the outer unit normal on $\partial\Omega$,%
\begin{equation}
\left(  \chi\cdot\nu\right)  (y,z)=\left\{
\begin{array}
[c]{ll}%
0 & \text{if }\left\vert y\right\vert =a,\\
\frac{1}{m+1}\left[  1-(\frac{a}{b})^{m+1}\right]  b & \text{if }\left\vert
y\right\vert =b.
\end{array}
\right.  \label{normal}%
\end{equation}
So property (a) of Proposition \ref{proppohozhaev}\ holds. Clearly, (b) holds.
Now,%
\begin{equation}
\diver\chi(y,z)=\left[  \varphi^{\prime}(\left\vert y\right\vert )\left\vert
y\right\vert +(m+1)\varphi(\left\vert y\right\vert )\right]
+N-m-1=N-m.\label{divN-m}%
\end{equation}
In particular, (c) holds. To prove (d) notice that $\chi$ is $O(m+1)$%
-equivariant, i.e.
\[
\chi(gy,z)=g\chi(y,z)\text{\qquad for every }g\in O(m+1).
\]
Therefore, $g\circ D\chi(y,z)=D\chi(gy,z)\circ g$ and, hence,%
\[
\left\langle D\chi\left(  y,z\right)  \left[  \xi\right]  ,\xi\right\rangle
=\left\langle g\left(  D\chi\left(  y,z\right)  \left[  \xi\right]  \right)
,g\xi\right\rangle =\left\langle D\chi\left(  gy,z\right)  [g\xi
],g\xi\right\rangle
\]
for all $\xi\in\mathbb{R}^{N}.$ Thus, it suffices to show that the inequality
(d) holds for $y=(t,0,\ldots,0)$ with $t\in(a,b).$ A straightforward
computation shows that, for such $y,$ $D\chi(y)$ is a diagonal matrix whose
diagonal entries are $a_{11}=1-m\varphi(t)$, $a_{jj}=\varphi(t)$ for
$j=2,\ldots,m+1,$ and $a_{jj}=1$ for $j=m+2,\ldots,N.$ Since $a_{jj}\in(0,1],$%
\begin{equation}
0<\left\langle D\chi\left(  y,z\right)  \left[  \xi\right]  ,\xi\right\rangle
\leq\left\vert \xi\right\vert ^{2}\text{\quad for all }\xi\in\mathbb{R}%
^{N}\smallsetminus\{0\}\label{quad}%
\end{equation}
and (d) follows. From (\ref{normal}), (\ref{PS}), (\ref{quad}) and
(\ref{divN-m}) we obtain%
\begin{align*}
0 &  <\frac{1}{2}\int_{\partial\Omega}\left\vert \nabla u\right\vert ^{2}%
\chi\cdot\nu=-\int_{\Omega}\left(  \diver\chi\right)  \phi(u,\nabla
u)+\int_{\Omega}D\chi\left[  \nabla u\right]  \cdot\nabla u\\
&  \leq(N-m)\int_{\Omega}\left[  \frac{1}{p}\left\vert u\right\vert ^{p}%
-\frac{1}{2}\left\vert \nabla u\right\vert ^{2}\right]  +\int_{\Omega
}\left\vert \nabla u\right\vert ^{2}\\
&  =(N-m)\left(  \frac{1}{p}-\frac{1}{2}+\frac{1}{N-m}\right)  \int_{\Omega
}\left\vert \nabla u\right\vert ^{2}.
\end{align*}
The first (strict) inequality follows from the unique continuation property
\cite{jk,gl}. This immediately implies that $p<2_{N,m}^{\ast}.$ 
\end{proof}

\section{The proof of Theorem \ref{thm2}}

\label{pf2}An $O(m+1)$-invariant function $u(y,z)=v(\left\vert y\right\vert
,z)$ solves problem (\ref{prob}) if and only if $v=v(r,z)$ solves
\begin{equation}
\left\{
\begin{array}
[c]{cl}%
-\Delta v-\frac{m}{r}\frac{\partial v}{\partial r}=|v|^{p-2}v & \text{in}%
\ (a,b)\times\mathbb{R}^{N-m-1}=:\mathcal{S},\\
v=0 & \text{on}\ \{a,b\}\times\mathbb{R}^{N-m-1}=\partial\mathcal{S},
\end{array}
\right.  \label{eq1}%
\end{equation}
and $|\nabla v|^{2},|v|^{p}\in L^{1}(\mathcal{S})$. Problem \eqref{eq1} can be
rewritten as%
\begin{equation}
-\diver(r^{m}\nabla v)=r^{m}|v|^{p-2}v\quad\text{in}\ \mathcal{S},\qquad
v=0\quad\text{on}\ \partial\mathcal{S}. \label{eqdiv}%
\end{equation}

By Poincar\'{e}'s inequality (see Lemma 3 in \cite{e}) and since $a<r<b$, the
norms%
\begin{equation}
\left\Vert v\right\Vert _{m}:=\left(  \int_{\mathcal{S}}r^{m}\left\vert \nabla
v\right\vert ^{2}\right)  ^{1/2}\text{\qquad and\qquad}\left\vert v\right\vert
_{m,p}:=\left(  \int_{\mathcal{S}}r^{m}\left\vert v\right\vert ^{p}\right)
^{1/p} \label{newnorms}%
\end{equation}
are equivalent to those of $H_{0}^{1}(\mathcal{S})$ and $L^{p}(\mathcal{S})$ respectively.

Consider the functional $I(v):=\Vert v\Vert_{m}^{2}$ restricted to
\[
M:=\{v\in H_{0}^{1}(\mathcal{S}):\left\vert v\right\vert _{m,p}=1\}.
\]
Then $M$ is a $C^{2}$-manifold, and $v$ is a critical point of $I|_{M}$ if and
only if $v\in H_{0}^{1}(\mathcal{S})$ and $\Vert v\Vert_{m}^{2/(p-2)}v$ is a
nontrivial solution to \eqref{eqdiv}. Note that $I|_{M}$ is bounded below by a
positive constant.

\begin{proof}
[Proof of Theorem \ref{thm2} (i)]Assume that $1\leq m<N-2$ and $2<p<2_{N,m}%
^{\ast}.$ Set $G:=O(N-m-1)$ and denote by $H_{0}^{1}(\mathcal{S})^{G}$ and
$L^{p}(\mathcal{S})^{G}$ the subspaces of $H_{0}^{1}(\mathcal{S})$ and
$L^{p}(\mathcal{S})$ respectively, consisting of functions $v$ such that
$v(r,gz)=v(r,z)$ for all $g\in G$. Esteban and Lions showed in \cite{el} that,
for these values of $m$ and $p,$ $H_{0}^{1}(\mathcal{S})^{G}$ is compactly
embedded in $L^{p}(\mathcal{S})^{G}$ (see also Theorem 1.24 in \cite{wi}). So
$H_{0}^{1}(\mathcal{S})^{G}$ is compactly embedded in $L^{p}(\mathcal{S})^{G}$
for the norms (\ref{newnorms}) as well.

Let%
\[
M^{G}:=\{v\in H_{0}^{1}(\mathcal{S})^{G}:\left\vert v\right\vert _{m,p}=1\}.
\]
It follows from the principle of symmetric criticality \cite[Theorem 1.28]{wi}
that the critical points of $I|_{M^{G}}$ are also critical points of $I|_{M}.$
The manifold $M^{G}$ is radially diffeomorphic to the unit sphere in
$H_{0}^{1}(\mathcal{S})^{G},$ so its Krasnoselskii genus is infinite. A
standard argument, using the compactness of the embedding $H_{0}%
^{1}(\mathcal{S})^{G}\hookrightarrow L^{p}(\mathcal{S})^{G}$ for the norms
(\ref{newnorms}), shows that $I|_{M^{G}}$ satisfies the Palais-Smale
condition. Hence $I|_{M^{G}}$ has infinitely many critical points (see e.g.
Theorem II.5.7 in \cite{str}). It can also be shown by a well-known argument
that the critical values of $I|_{M^{G}}$ tend to infinity (see e.g.
Proposition 9.33 in~\cite{ra}).

It remains to show that \eqref{eqdiv} has a positive solution. The argument is
again standard: since $I|_{M^{G}}$ satisfies the Palais-Smale condition,
\[
c_{0}^{G}:=\inf\{I(v):v\in M^{G}\}
\]
is attained at some $v_{0}$. Since $I(v)=I(\left\vert v\right\vert )$ and
$\left\vert v\right\vert \in M^{G}$ if $v\in M^{G}$, we have that
$I(|v_{0}|)=c_{0}^{G}$ and we may assume $v_{0}\geq0$. The maximum principle
applied to the corresponding solution $u_{0}$ of \eqref{prob} implies
$u_{0}>0$.
\end{proof}

If $m=N-2$, then $G=O(1)$ and it is easy to see that the space $H_{0}%
^{1}(\mathcal{S})^{G}$ is not compactly embedded in $L^{p}(\mathcal{S})^{G}$.
So part (ii) of Theorem \ref{thm2} requires a different argument.

\begin{proof}
[Proof of Theorem \ref{thm2} (ii)]Assume that $1\leq m=N-2$ and $2<p<\infty.$
We shall show that
\[
c_{0}:=\inf\{I(v):v\in M\}
\]
is attained. Clearly, a minimizing sequence $(v_{n})$ is bounded, so we may
assume that $v_{n}\rightharpoonup v$ weakly in $H_{0}^{1}(\mathcal{S})$.
According to P.-L. Lions' lemma \cite[Lemma 1.21]{wi} either $v_{n}%
\rightarrow0$ strongly in $L^{p}(\mathcal{S}),$ which is impossible because
$v_{n}\in M$, or there exist $\delta>0$ and $(r_{n},z_{n})\in\lbrack
a,b]\times\mathbb{R}$ such that, after passing to a subsequence if necessary,%
\begin{equation}
\int_{B_{1}(r_{n},z_{n})}v_{n}^{2}\geq\delta.\label{delta}%
\end{equation}
Here $B_{1}(r_{n},z_{n})$ denotes the ball of radius 1 and center at
$(r_{n},z_{n})$. Since the problem is invariant with respect to translations
along the $z$-axis, replacing $v_{n}(r,z)$ by $v_{n}(r,z+z_{n})$, we may
assume the center of the ball above is $(r_{n},0)$. It follows that for this -
translated - sequence the weak limit $v$ cannot be zero due to \eqref{delta}
and the compactness of the embedding of $H_{0}^{1}(\mathcal{S})$ in
$L_{loc}^{2}(\mathcal{S})$. Passing to a subsequence once more, we have that
$v_{n}(x)\rightarrow v(x)$ a.e. It follows from the Brezis-Lieb lemma
\cite[Lemma 1.32]{wi} that
\[
1=|v_{n}|_{m,p}^{p}=\lim_{n\rightarrow\infty}|v_{n}-v|_{m,p}^{p}+|v|_{m,p}%
^{p}.
\]
Using this identity and the definition of $c_{0}$ we obtain%
\begin{align*}
c_{0} &  =\lim_{n\rightarrow\infty}\Vert v_{n}\Vert_{m}^{2}=\lim
_{n\rightarrow\infty}\Vert v_{n}-v\Vert_{m}^{2}+\Vert v\Vert_{m}^{2}\geq
c_{0}\left(  \lim_{n\rightarrow\infty}|v_{n}-v|_{m,p}^{2}+|v|_{m,p}%
^{2}\right)  \\
&  =c_{0}\left(  (1-|v|_{m,p}^{p})^{2/p}+(|v|_{m,p}^{p})^{2/p}\right)  \geq
c_{0}(1-|v|_{m,p}^{p}+|v|_{m,p}^{p})^{2/p}=c_{0}.
\end{align*}
Since $v\neq0$, it follows that $|v_{n}-v|_{m,p}\rightarrow0$ and
$|v|_{m,p}=1$. So $v\in M$ and, as $c_{0}=\lim_{n\rightarrow\infty}%
I(v_{n})\geq I(v)$, we must have $I(v)=c_{0}$.

So the infimum is attained at $v$ and using the moving plane method
\cite[Appendix C]{wi}, we may assume, after translation, that $v(r,-z)=v(r,z)$%
, i.e. $v\in H_{0}^{1}(\mathcal{S})^{O(1)}$. As in the preceding proof,
replacing $v$ by $|v|$, we obtain a positive solution.
\end{proof}

\section{Further solutions and an open question}

\label{rem}

If $1\leq m=N-2$ and $p\in(2,2_{N,m}^{\ast})$, the method we have used to
prove Theorem \ref{thm2}\ only guarantees the existence of two solutions to
problem \eqref{prob}, one positive and one negative, up to translations along
the $z$-axis. However, if $p\in(2,2^{\ast})$, then it is possible to show that
there are infinitely many solutions, which are not radial in $y,$ but have
other prescribed symmetry properties.

Write $y=(y^{1},y^{2})\in\mathbb{R}^{2}\times\mathbb{R}^{m-1}\equiv
\mathbb{R}^{m+1}$ and identify $\mathbb{R}^{2}$ with the complex plane
$\mathbb{C}$. Following \cite{sw}, we denote by $G_{k}$, $k\geq3$, the
subgroup of $O(2)$ generated by two elements $\alpha,\beta$ which act on
$\mathbb{C}$ by
\[
\alpha y^{1}:=e^{2\pi i/k}y^{1},\text{\qquad}\beta y^{1}:=e^{2\pi
i/k}\overline{y^{1}},
\]
i.e. $\alpha$ is the rotation in $\mathbb{C}$ by the angle $2\pi/k$ and
$\beta$ is the reflection in the line $y_{2}^{1}=\tan(\pi/k)y_{1}^{1}$, where
$y^{1}=y_{1}^{1}+\mathrm{i}y_{2}^{1}\in\mathbb{C}$. Observe that $\alpha
,\beta$ satisfy the relations $\alpha^{k}=\beta^{2}=e$, $\alpha\beta
\alpha=\alpha$. Let $G_{k}$ act on $\mathbb{R}^{N}$ by $gx=(gy^{1},y^{2},z).$

\begin{theorem}
If $1\leq m\leq N-2$ and $2<p<2^{\ast}$ then, for each $k\geq3,$ problem
\eqref{prob} has a solution $u_{k}$ which satisfies%
\begin{equation}
u_{k}(x)=\text{\emph{det}}(g)u_{k}(g^{-1}x)\text{\qquad for all }g\in G_{k},
\label{symm}%
\end{equation}
and $u_{k}\neq u_{j}$ if $k\neq j.$
\end{theorem}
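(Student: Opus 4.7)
The plan is to minimize the Dirichlet energy on the $G_k$-twisted constraint manifold in $H_0^1(\Omega)$ and distinguish the resulting solutions by applying the strong maximum principle on a fundamental wedge for $G_k$.

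First I would set up the variational framework. The twisted action $(g\cdot u)(x):=\det(g)u(g^{-1}x)$ is a linear isometry on $H_0^1(\Omega)$ and on $L^p(\Omega)$ for each $g\in G_k$, and its fixed-point subspace $H_k$ is exactly the set of $u$ satisfying \eqref{symm}. Since the functional $I(u):=\io|\nabla u|^2$ and the constraint $|u|_p=1$ are invariant under this action, the principle of symmetric criticality in its twisted form (see \cite{sw}) implies that every critical point of $I$ restricted to $M_k:=\{u\in H_k:|u|_p=1\}$ is a critical point on $M:=\{u\in H_0^1(\Omega):|u|_p=1\}$ and hence, after rescaling, yields a nontrivial solution of \eqref{prob} satisfying \eqref{symm}. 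The continuous embedding $H_0^1(\Omega)\hookrightarrow L^p(\Omega)$ for $p<2^\ast$ gives $c_k:=\inf_{M_k}I>0$.

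Second, I would show that $c_k$ is attained, following the concentration-compactness argument used in the proof of Theorem~\ref{thm2}(ii). A minimizing sequence $(u_n)\subset M_k$ is bounded in $H_0^1(\Omega)$; by Lions' vanishing lemma, either $u_n\to 0$ in $L^p(\Omega)$---ruled out by $|u_n|_p=1$---or there exist $\delta>0$ and $x_n=(y_n,z_n)\in\overline{\Omega}$ with $\int_{B_1(x_n)}|u_n|^2\ge\delta$. Since $|y_n|<b$, only $z_n$ may be unbounded, and translation in $z$ preserves $H_k$ because $G_k$ acts trivially on the $z$-variable. After such a translation and passing to a subsequence, the weak limit $u_k$ of $(u_n)$ is nonzero by local compactness. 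The Brezis-Lieb splittings of $|\cdot|_p^p$ and $\|\cdot\|^2$, together with the inequality $\|w\|^2\ge c_k|w|_p^2$ on $H_k$ and the strict concavity of $t\mapsto t^{2/p}$ for $p>2$, force $|u_k|_p=1$ and $I(u_k)=c_k$, producing the desired minimizer.

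Third, to establish $u_k\ne u_j$ for $k\ne j$, assume without loss of generality $k<j$ and consider the fundamental wedge $W_k^+:=\{x\in\Omega:\arg y^1\in(0,\pi/k)\}$ for the action of $G_k$ in the $y^1$-plane. Replacing $u_k|_{W_k^+}$ by $|u_k|$ and extending by twisted symmetry produces another element of $H_k\cap M_k$ with the same energy $c_k$; by elliptic regularity this modified minimizer (which we still call $u_k$) is $C^2$ and solves the PDE. Since $u_k\ge 0$ and $u_k\not\equiv 0$ on $W_k^+$, the strong maximum principle gives $u_k>0$ in the interior of $W_k^+$. If $u_k=u_j$, then $u_k$ lies in $H_j$ and must therefore vanish on the reflection hyperplane of $G_j$ at angle $\pi/j$, which meets the interior of $W_k^+$ because $\pi/j<\pi/k$---contradicting $u_k>0$ there.

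The step I expect to be the main obstacle is the concentration-compactness argument of the second paragraph. Because the $G_k$-action is trivial on the $z$-variable, the twisted symmetry does not by itself restore compactness along $z$-translations, so the Lions/Brezis-Lieb machinery must be executed carefully, as in the proof of Theorem~\ref{thm2}(ii), to guarantee that the translated weak limit is nonzero and that its full $L^p$-mass equals one.
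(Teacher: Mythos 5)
Your proposal is correct, and its core coincides with the paper's: the same twisted $G_k$-action on $H_0^1(\Omega)$, the principle of symmetric criticality, and minimization of $I$ on $M^{G_k}$ with compactness recovered exactly as in the proof of Theorem \ref{thm2}(ii) (translations along $z$, which indeed preserve the fixed-point space since $G_k$ acts only on $y^1$, plus Lions' lemma and Brezis--Lieb). The only place where you genuinely deviate is the distinctness step: the paper simply invokes Corollary 2.7 of \cite{sw}, which says the minimizer may be chosen with exactly $2k$ nodal domains, so $u_k\neq u_j$ by counting; you instead make this self-contained by symmetrizing $|u_k|$ from the fundamental wedge $W_k^+=\{\arg y^1\in(0,\pi/k)\}$, applying the strong maximum principle there, and observing that a function in the $G_j$-space with $j>k$ must vanish on the reflection line at angle $\pi/j$, which cuts through $W_k^+$. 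This is essentially the mechanism behind the cited corollary, but written out directly, and it works; two small points you should make explicit are (i) that the symmetrized function lies in $H_0^1(\Omega)$ because $u_k$, being anti-invariant under the reflections of $G_k$, has zero trace on the wedge walls $\{\arg y^1=0\}$ and $\{\arg y^1=\pi/k\}$ (the set $\{y^1=0\}$ being of zero capacity), so the piecewise-defined extension glues in $H^1$ and has the same $\|\cdot\|$ and $|\cdot|_p$ as $u_k$; and (ii) that $W_k^+\cap\Omega$ is connected, so the strong maximum principle yields positivity on all of it. With these remarks added, your argument is a complete and slightly more elementary substitute for the reference to \cite{sw}.
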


\begin{proof}
Since the approach is taken from \cite{sw}, we give only a brief sketch of the
proof here and refer to Section 2 of \cite{sw} for more details.

The group $G_{k}$ acts on $H_{0}^{1}(\Omega)$ by
\[
(gu)(x):=\text{det}(g)u(g^{-1}x),
\]
where $\text{det}(g)$ is the determinant of $g$. Let%
\[
H_{0}^{1}(\Omega)^{G_{k}}:=\{u\in H_{0}^{1}(\Omega):u(gx)=\text{det}%
(g)u(g^{-1}x)\text{ for all }g\in G_{k}\}
\]
be the fixed point space of this action, and define $I(u):=\int_{\Omega
}\left\vert \nabla u\right\vert ^{2}$ and%
\[
M^{G_{k}}:=\{u\in H_{0}^{1}(\Omega)^{G_{k}}: |u|_{p}=1\}.
\]
By the principle of symmetric criticality the critical points of
$I|_{M^{G_{k}}}$ are nontrivial solutions to problem \eqref{prob} which
satisfy (\ref{symm}). Now we can see as in the proof of part (ii) of Theorem
\ref{thm2} that there exists a minimizer $u_{k}$ for $I$ on the manifold
$M^{G_{k}}$. Moreover, we may assume that $u_{k}$ has exactly $2k$ nodal
domains, see Corollary 2.7 in \cite{sw}. So in particular, $u_{k}\neq u_{j}$
if $k\neq j$.
\end{proof}

The question whether problem \eqref{prob} has infinitely many solutions when
$1\leq m=N-2$ and $p\in\lbrack2^{\ast},2_{N,m}^{\ast})$ remains open. We
believe that the answer is yes, but the proof would require different methods.

\end{document}